\newtheorem{theorem}{Theorem}
\newtheorem{remark}{Remark}
\newtheorem*{udefinition}{Definition}
\newcommand{\R}{\mathbb R}
\newcommand{\Z}{\mathbb Z}
\newcommand{\C}{\mathbb C}
\title{Straightening Billiard Trajectories in Flat Disks and Katok-Zemlyakov Construction}
\author{ \.{I}smail Sa\u{g}lam \thanks{Electronic address: \texttt{isaglamtrfr@gmail.com}  }}
\affil{Adana Alparslan Turkes Science and Technology University  }
\date{}
\begin{document}

\maketitle

\begin{abstract}
We provide a method to straighten each billiard trajectory in a flat disk. As an application, we show that for each point on the disk and for almost all directions, the closure of the corresponding  billiard trajectory contains a vertex. We generalize Katok-Zemlyakov construction to the flat disks with rational angle data: for each rational flat disk  we obtain a translation surface and a projection to the doubling of the disk. We calculate Euler characteristics of this translation surface.

\end{abstract}

%\tableofcontents

\section{Introduction}
\label{introduction}
Billiard flows in polygons is a subject which is being studied extensively.  Ergodicity and minimality of the flows, number of closed geodesics and generalized diagonals are among the most important topics in this area. See the surveys \cite{Gutkin1},\cite{Gutkinsurvey},\cite{rational-billiard}.

Polygons are special examples of the flat surfaces.  A flat surface is a surface together with a metric obtained by gluing Euclidean triangles along their edges by isometries. See \cite{ISP},\cite{MT},\cite{MT1},\cite{MT2},\cite{MT3} for more information about flat surfaces. For each point $p$ in the surface, there exists a well defined notion of angle, $\theta_p$. If an interior point has angle $2\pi$, then it is called non-singular. Otherwise, it is called singular. If a boundary point has angle $\pi$, then it is called non-singular. Otherwise, it is called singular. Let $S$ be a compact flat surface. Let $i(S)$ and $b(S)$ be the boundary and the interior of $S$, respectively. The following formula (Gauss-Bonnet) holds:

\begin{align*}
\sum_{p\in i(S)}(2\pi -\theta_p)+\sum_{p \in b(S)}(\pi-\theta_p)=2\pi \chi(S),
\end{align*}
where $\chi(S)$ is the Euler characteristics of $S$. 

It is hard to say that the study of the billiard flow on any compact flat surface is possible. Nevertheless, there is a family of flat surfaces that has been studied extensively up to now. This family consists of the closed flat surfaces with trivial holonomy groups. Such a surface is called a translation surface. See \cite{Zorich}, \cite{Masurer} for more information about the translation surfaces. The aim of this manuscript is to point out that there is another family of the flat surfaces for which it is not too much to hope that the billiard flow can be studied in a systematic manner. This family consists of flat disks.

A flat disk is a closed, oriented topological disk together with a flat metric on it. Polygons are simplest examples of the flat disks. Each flat disk $D$ may have finitely many singular interior and singular boundary points. Gauss-Bonnet formula becomes

\begin{align*}
\sum_{p\in i(D)}(2\pi -\theta_p)+\sum_{p \in b(D)}(\pi-\theta_p)=2\pi.
\end{align*}

\begin{figure}

	\centering
	\includegraphics[scale=0.6]{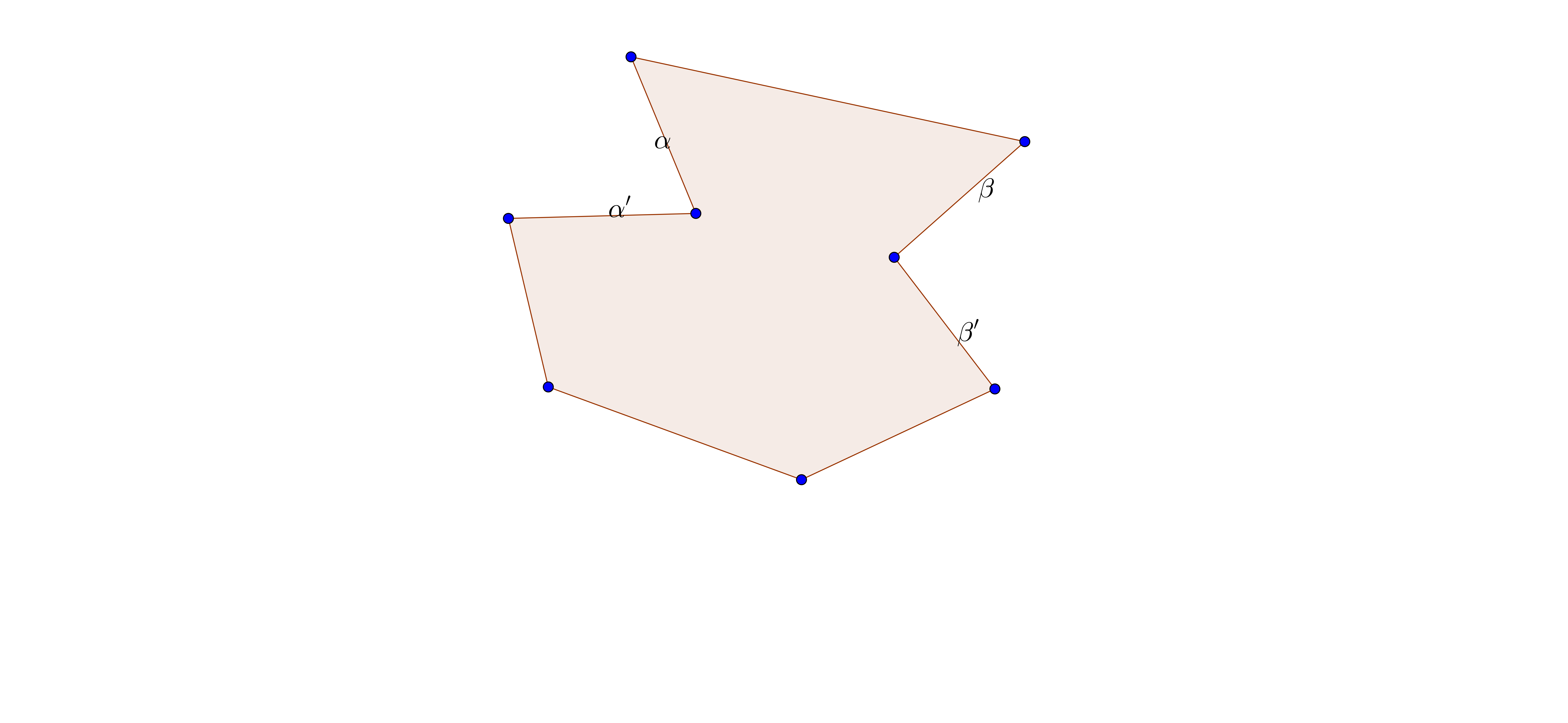}
	\caption{Constructing a flat disk out of a polygon}
		\label{flatdisk}
\end{figure}
 
 Now we construct a flat disk. See Figure \ref{flatdisk}.  Note that the length of the edge $\alpha$ is equal to the length of the edge $\alpha'$, and the length of the edge $\beta$ is equal to the length of the edge $\beta'$. If we glue $\alpha$ and $\alpha'$, $\beta$ and $\beta'$, then we get a flat disk with two singular interior points and four singular boundary points. Indeed, any flat metric on a disk may be constructed from a flat disk in a similar manner. See Section \ref{unfoldingflatdisks}.
 
 There is a simple but quite useful trick used in the study of the polygonal billiards to unfold a billiard trajectory in a polygon. See Section \ref{unfoldingpolygons}. The idea of the trick is to reflect the billiard table instead of reflecting the billiard ball. We give a generalization of this trick for an arbitrary flat disk. See Section \ref{unfoldingflatdisks}. As a result, we generalize a theorem obtained by Boldrighini, Keane and Marchetti \cite{boldrighini}. See also \cite{sinai}
 
 In \cite{boldrighini}, it was proved that for any point in a polygon and for almost all directions, the closure of the billiard trajectory passing through this point having the same direction contains a vertex. See \cite{trou} for an improvement of this result. We show that for all points on a flat disk and almost all directions, the closure of the billiard trajectory contains a singular boundary point or a singular interior point. See Section \ref{theorembkm}.
 
 A polygon is rational if its angles are rational multiples of $\pi$. From a rational polygon one can obtain a translation surface. This is known as Katok-Zemlyakov construction.  See \cite{top-tran}. This surface is called invariant surface. By that way one can argue that a rational polygon contains closed billiard trajectories, and give upper and lower bounds for number of closed trajectories and generalized diagonals. See \cite{masurbound}. In this manuscript we do a parallel construction for rational flat disks. 
 
 A flat disk is called rational if the angles at its singular points are rational multiples of $\pi$. This is equivalent that the holonomy group of the doubling of the flat disk is finite,  where the the doubling of a flat surface with boundary is the surface obtained by gluing two copies of the surface along their boundaries. For such a flat disk, we can construct a translation surface and a projection from the translation surface to the disk or its doubling.  We show that the translation surface that we obtain is unique; it does not depend on how we cut the disk to obtain this translation surface. See Sections \ref{KZflatdisks}, \ref{uniqueness}.  Finally, we calculate the Euler characteristic of this translation surface. See Section \ref{eulerchar}.

 \section{Straightening Billiard Trajectories and a Generalization of the Theorem of Boldrighini, Keane and Marchetti}
 \label{section2}
 In this section we recall how to unfold billiard trajectories in the polygons. Then we extend the construction for the flat disks. Also, we extend the theorem of Boldrighini, Keane and Marchetti to the flat disks \cite{boldrighini}. 
 
 \subsection{Unfolding Trajectories in Polygons}
 \label{unfoldingpolygons}
 Let $P$ be a polygon in $\R^2$. Label its edges as $e_1,\dots,e_n$. Let $\gamma$ be a billiard trajectory which never hits the vertices $y_1,\dots, y_n$. Assume that $\gamma$
 first hits the edge $\omega_1$, then the edge $\omega_2$, and in general let $\omega_k$ be the edge which $\gamma$ hits at its $k$-th intersection with the boundary $b(P)$. Therefore we have an infinite word 
 
 \begin{align*}
 \omega=(\omega_1,\omega_2,\dots), \ \omega_k \in \{e_1, \dots,e_n\}.
 \end{align*}
Let $P_0=P$ and $P_1$ be the polygon obtained by reflecting $P_0$ along $\omega_1$. In general, let $P_k$, $k\geq 1$, be the copy of $P$ obtained by reflecting $P_{k-1}$ along its edge  $\omega_n$. See the Figure \ref{unfoldingtriangle}.
\begin{figure}
	\hspace*{-10cm}                                                           
	
	\centering
	\includegraphics[scale=0.6]{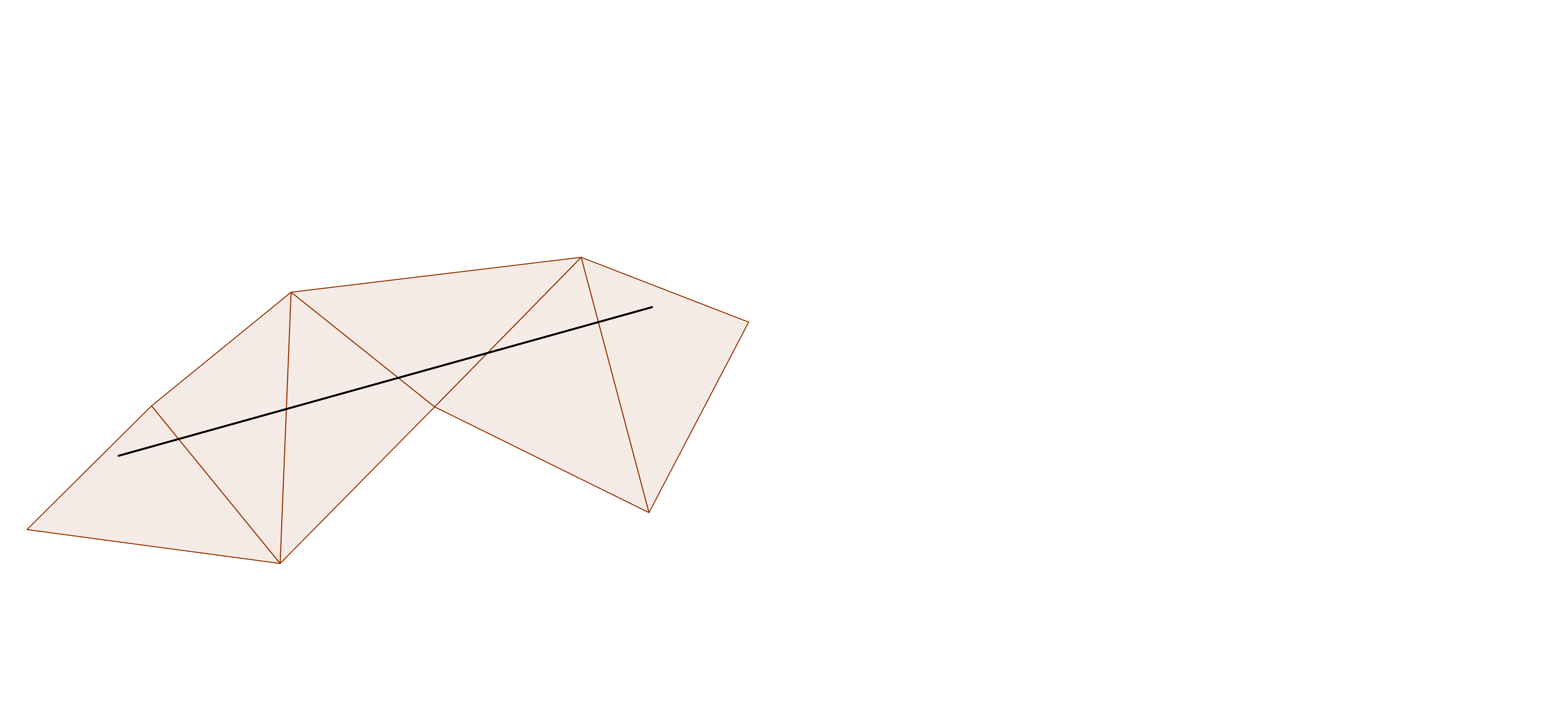}
	\caption{Unfolding billiard trajectory in a triangle}
	\label{unfoldingtriangle}
\end{figure}
Let $Q_0=P_0$ and $Q_1=P_0\sqcup_{\sim}P_1$, where we identify the edges labeled by $\omega_1$ in $P_0$ and $P_1$. Then we can unfold $\gamma$ at $\omega_1$ as a straight line segment. By that way, we can obtain a line segment in $Q_1$ which hits the edge $\omega_2$. Call this segment $\gamma_1$. 

In general, we can form the polygon $Q_k=Q_{k-1}\sqcup_{\sim}P_k$, where $Q_{k-1}=P_0\sqcup_{\sim}P_1\dots\sqcup_{\sim}P_{k-1}$ and $P_k$ are glued through the edges of $P_{k-1}$ and $P_k$ labeled by $\omega_k$. Similarly, I can unfold $\gamma_{k-1}$ at $\omega_k$ as a straight line segment intersecting $\omega_{k+1}$. We call this line segment $\gamma_k$. This means that I can obtain an unbounded complete flat surface 

$Q_{\infty}=P_0\sqcup_{\sim}P_1\sqcup_{\sim}P_{2}\dots$

\noindent with a non-singular interior and a straight half-line $\gamma_{\infty}$. Also, there is a projection $\Pi_{P}: Q_{\infty} \rightarrow Q$ sending $\gamma_{\infty}$
 to $\gamma$. It is clear that the restriction of this projection to each $P_i \subset Q_{\infty}$ is an isometry between $P_i$ and $P$.
 
 \subsection{Unfolding of Trajectories on Flat Disks}
 \label{unfoldingflatdisks}
 Before we proceed, let us make it clear what we mean by {\it unfolding}. In the case of polygons, we obtained a flat surface $Q_{\infty}$ with non-singular interior and a line $\gamma_{\infty}$ on it such that there is a projection $\Pi_P: Q_{\infty}\rightarrow P$ sending $\gamma_{\infty}$ to the billiard trajectory $\gamma$. For the case of the flat disks, we need to seek for such a surface and a projection map. 
 
 Given a flat disk $D$,  one my try to construct $Q_{\infty}$ as follows. Let $e_1, e_2, \dots, e_n$ be the edges of $D$. Let $\gamma$ be a billiard trajectory which never hits the singular points. Let

  \begin{align*}
 \omega=(\omega_1,\omega_2,\dots), \ \omega_k \in \{e_1, \dots,e_n\}.
 \end{align*}
 
 \noindent where $\omega_k$ is the edge on which the trajectory intersects the boundary at the $k$-th time. Let $D_0, D_1, \dots$ be the copies of $D$. Let $Q_0=D_0$ and $Q_1=D_0\sqcup_{\sim}D_1$, where we identify the edges labeled $\omega_1$ in $D_0$ and $D_1$.  In general, let
 
 $$Q_{k}=Q_{k-1}\sqcup D_k,$$
 where we glue the edgs labeled $\omega_k$ in $Q_{k-1}$ and $D_k$. Similarly
 
 $$Q_{\infty}=D_1\sqcup_{\sim}D_2\sqcup_{\sim}D_3\dots$$ 
 
 Indeed, $Q_{\infty}$ is not good for our purposes. It has infinitely many singular points in its interior if interior of $D$ is singular.
 
 Another important thing is that there may be trajectories in $D$ which never hit the boundary and the singular points. See the Figure \ref{periodic}. Take two copies of a square and glue the edges $e_1$ with $e_1'$, $e_2$ with $e_2'$, $e_3$ with $e_3'$ to get a flat disk with two singular points on its interior. The geodesic shown in the figure is periodic and never hits the boundary. Note that this situation does not arise in the case where the angle at each interior point is greater than or equal to $2\pi$. In that case, a trajectory either hits the boundary or the singular points. This can be proved by a simple argument involving Gauss-Bonnet Formula.

 \begin{figure}
 	\hspace*{-10cm}                                                           
 	
 	\centering
 	\includegraphics[scale=0.6]{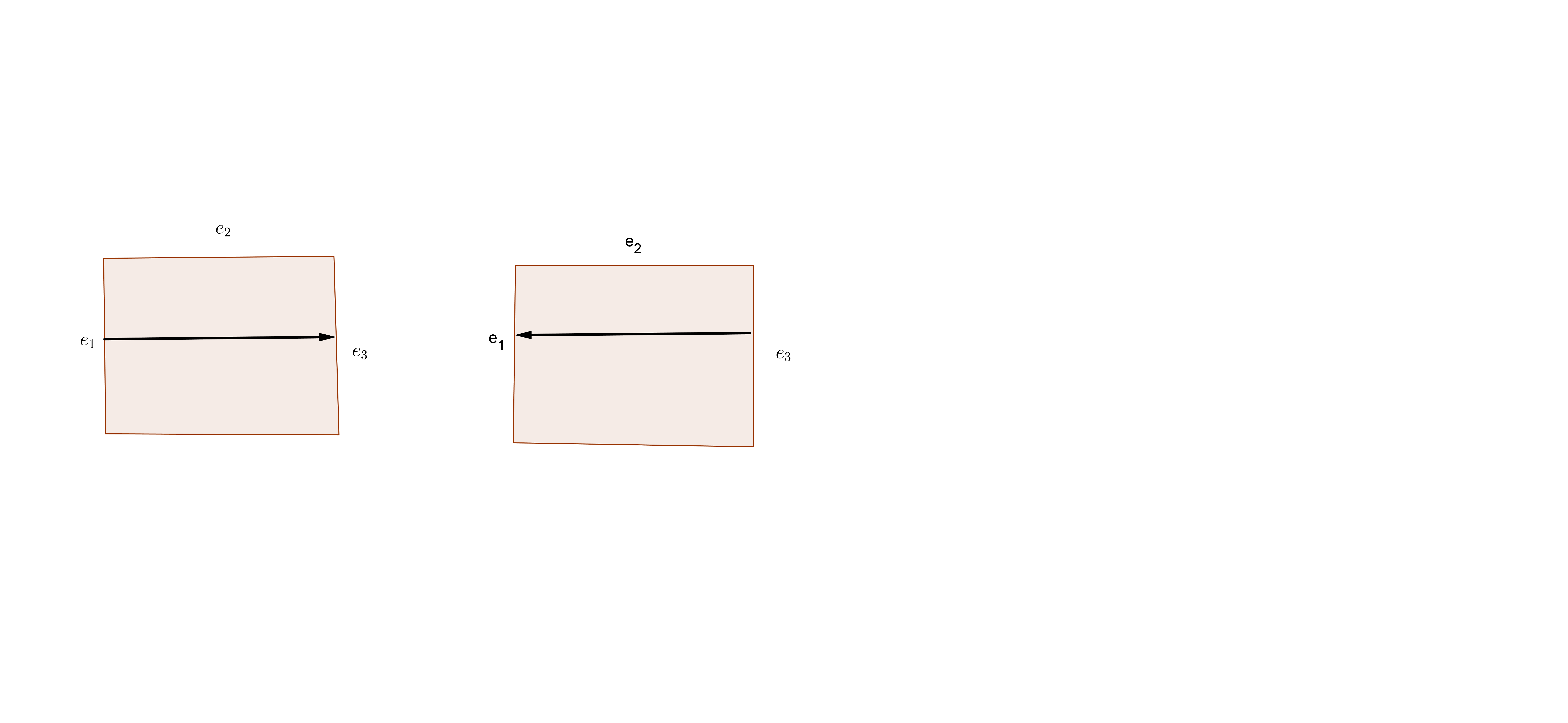}
 	\caption{A periodic billiard trajectory which never hits the boundary}
 	\label{periodic}
 \end{figure}

 Now we explain how to unfold the billiard trajectories in the flat disks. Let $D$ be a flat disk with $m$ singular interior points $x_1,\dots,x_m$ and $n$ singular boundary points $y_1,\dots,y_n$. Let $e_1, \dots, e_n$ be the boundary segments or the edges of $D$.

 Let $a_1, \dots,a_m$ be  $m$ not self-intersecting broken geodesics such that 
 
 \begin{enumerate}
 	\item 
 	$a_i$ joins $x_i$ and a singular boundary point $y_{j(i)}$,
 	\item
 	$a_i\cap b(D)=\{y_{j(i)}\}$,
 	\item
 	$a_i\cap a_j \subset b(D)$.
 	
 \end{enumerate}
 
Let $\gamma$ be a billiard trajectory which never hits the singular points. $\gamma$ will intersect the set 
\begin{align}
\frak{a}=a_1\cup a_2\dots\cup a_m \cup e_1 \cup \dots \cup e_n
\end{align}
infinitely many times.  Also, since $D$ is oriented it makes sense to say $\gamma$ hits $a_i$ from left or right. Let 

$$\omega=(\omega_1,\omega_2,\dots)$$
 be the infinite word consisting of the letters
 
 $$a_1^l,a_1^r,\dots, a_m^l,a_m^r, e_1,\dots e_n$$ 
 
\noindent  such that 
 
 \begin{itemize}
 	\item 
 	$\omega_k=a_i^l$ if the trajectory hits $\frak{a}$ from the left $a_i$ at the $k$-th time,
 	
 	\item
 	$\omega_k=a_i^r$ if the trajectory hits  $\frak{a}$ from the right of $a_i$ at the $k$-th time,
 	
 	\item
 	$\omega_k=e_i$ if the trajectory hits $\frak{a}$ at $e_i$
 at the $k$-th time.

 \end{itemize}

Cut $D$ through $a_i$'s to get a flat disk $D^*$ with a non-singular interior and $2m+n$ boundary components. Clearly, we can label boundary components of $D^*$ with the symbols

 $$a_1^l,a_1^r,\dots, a_m^l,a_m^r, e_1,\dots e_n$$

\noindent so that this labeling coincides with the labeling of $D$.

 Take countably many copies $D_0^*,D_1^*,\dots$ of $D^*$. Let $Q_0=D_0^*$ and $Q_1=D_0^*\sqcup_{\sim}D_1^*,$ where we identify the edges
 
 \begin{enumerate}
 	\item 
 	labeled $e_i$ in $D_0^*$ and $D_1^*$ if $\omega_1=e_i$,
 	\item
 	labeled $a_i^l$ in $D_0^*$ and $a_i^r$ in $D_1^*$ if $\omega_1=a_i^l$,
 	\item
 	labeled $a_i^r$ in $D_0^*$ and $a_i^l$ in $D_1^*$ if $\omega_1=a_i^r$.
 \end{enumerate} 
 
 Observe that we can unfold $\gamma$ to a line segment $\gamma_1$ hitting the edge $\omega_2$ in $D_1^*$. We can define 
 \begin{align}
 Q_k=Q_{k-1}\sqcup_{\sim}D_k^*= D_0^*\sqcup_{\sim}D_1^*\dots\sqcup_{\sim}D_k^*
 \end{align}
\noindent as follows. We identify
the edges 
  \begin{enumerate}
 	\item 
 	labeled $e_i$ in $D_{k-1}^*$ and $D_k^*$ if $\omega_k=e_i$,
 	\item
 	labeled $a_i^l$ in $D_{k-1}^*$ and $a_i^r$ in $D_k^*$ if $\omega_k=a_i^l$,
 	\item
 	labeled $a_i^r$ in $D_{k-1}^*$ and $a_i^l$ in $D_k^*$ if $\omega_k=a_i^r$.
 \end{enumerate} 
 We can unfold $\gamma_{k-1}$ to a line segment in $Q_k$ which hits the edge $\omega_{k+1}$ in $D_k^*$. Call this segment $\gamma_k$. Clearly, there is a projection $\Pi_k: Q_k \rightarrow D$ which sends $\gamma_k$ to $\gamma$ and is an  isometry when restricted to the  the interior of each $D_i^*$, $0\leq i \leq k$. Let 
 
  \begin{align}
 Q_{\infty}= D_0^*\sqcup_{\sim}D_1^*\sqcup_{\sim}D_2^*\dots
 \end{align}

 \noindent $Q_{\infty}$ is a flat surface which is unbounded, complete and has non-singular interior. Clearly, there is a projection
 
 \begin{align}
 \Pi_{\infty}: Q_{\infty}\rightarrow D 
  \end{align}
 \noindent and a straight half-line $\gamma_{\infty}\subset Q_{\infty}$ such that $\Pi_{\infty}(\gamma_{\infty})$ is $\gamma$. Clearly, $\Pi_{\infty}$ induces an isometry between the interior of $D_k \subset Q_{\infty}$ and $D \setminus(a_1\cup a_2 \dots a_m \cup e_1\cup e_2 \dots \cup e_n)$.

 \begin{figure}
 	\hspace*{-10cm}                                                           
 	
 	\centering
 	\includegraphics[scale=0.6]{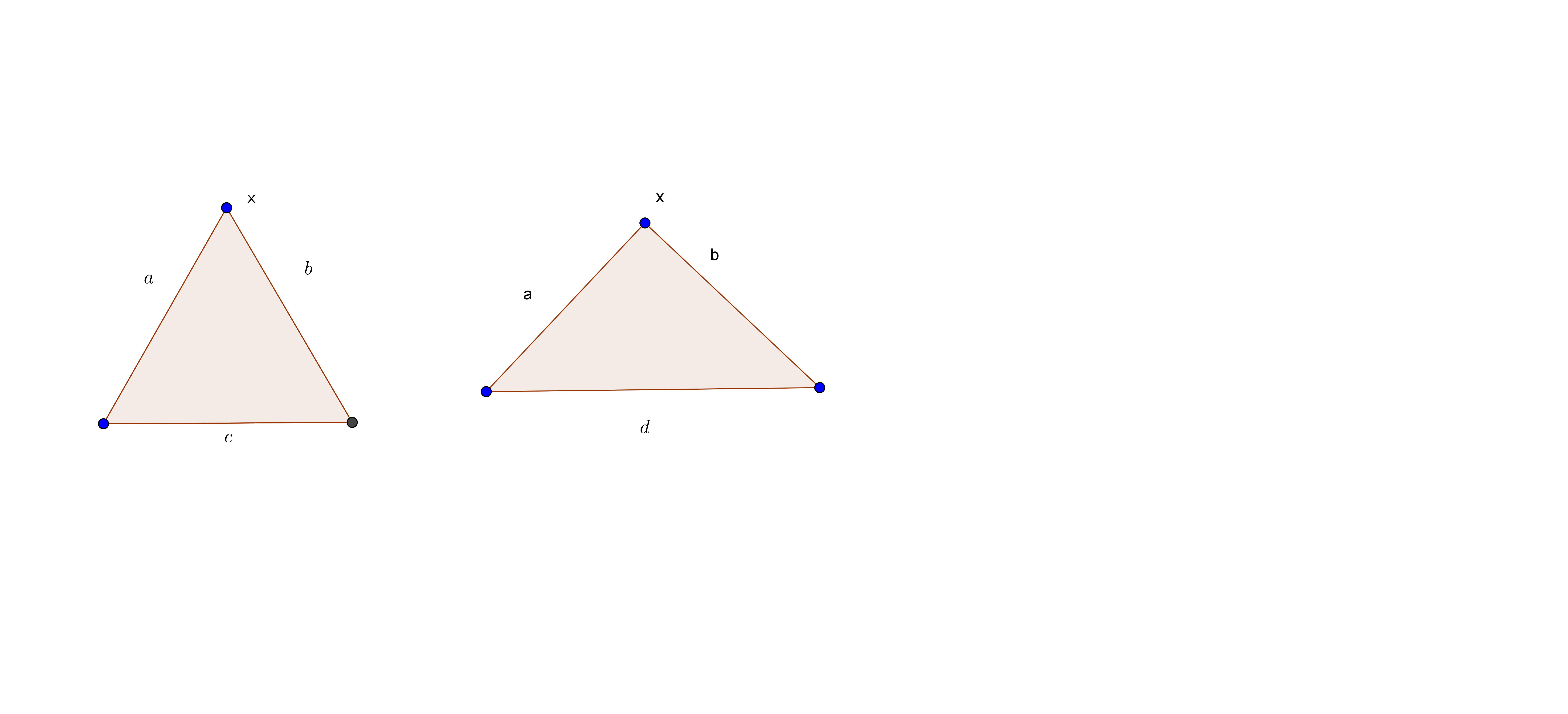}
 	\caption{A flat surface out of an equilateral and a right isosceles triangle}
 	\label{equilateral}
 \end{figure}
 
 \paragraph{Example:} Take an equilateral triangle of the edge length 1 and isosceles  right triangle having edge lengths $1,1,\sqrt{2}$. Label their edges as in the Figure \ref{equilateral}. Glue the edges having the same labels to get a flat disk with one singular interior point and two singular boundary points. Call this disk $D$. Let $D^*$ is the polygon obtained by cutting $D$ along $a$. It is shown in  Figure \ref{unfoldingflat}. Let $\gamma$ be a billiard trajectory in $D$. We can unfold it as follows. Whenever the trajectory of $\gamma$ hits $a$ (from left or right) We put a copy of $D^*$ in the plane which is obtained by rotating $D^*$ about $x$ by an angle of 150$^\circ$ . Whenever the trajectory of $\gamma$ hits $c$ (or $d$), we put a copy of $D^*$ in the plane which is obtained by reflecting $D^*$   along $c$ (or $d$). See the Figure \ref{unfoldingflat}. 
 \begin{figure}
 	\hspace*{-10cm}                                                           
 	
 	\centering
 	\includegraphics[scale=0.85]{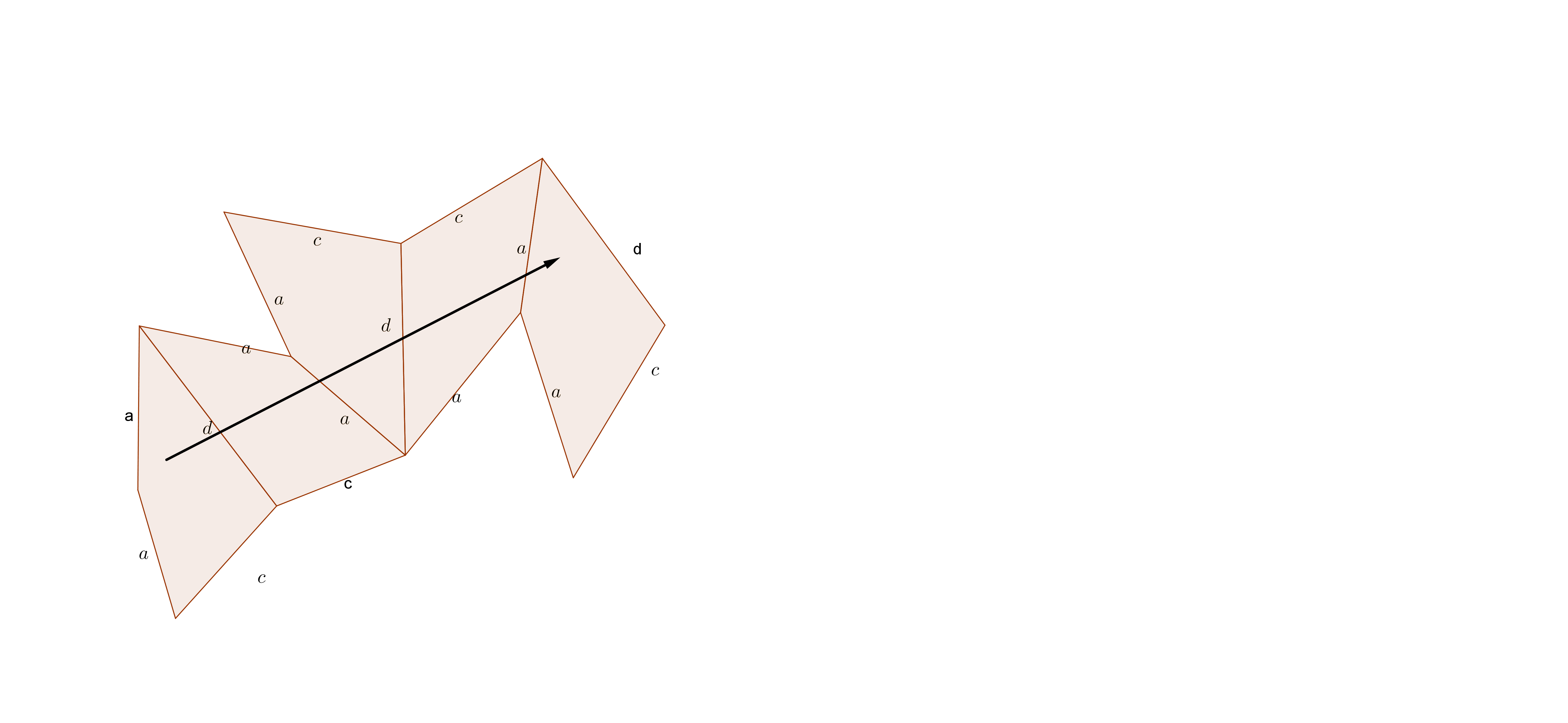}
 	\caption{Unfolding billiard trajectory in a flat disk}
 	\label{unfoldingflat}
 \end{figure}
 \begin{remark}
 	It may be possible that the boundary of $D$ is non-singular. See Figure \ref{periodic}. In that case one can choose an arbitrary point on the boundary and join singular 
 	interior points and this point by some broken geodesics to obtain $D^*$, and construct  $Q_{\infty}$ in a similar manner. Indeed, in general, the boundary points that we choose to specify $a_i$'s do not need to be singular. We choose singular boundary points since this makes the notation considerably simple. 
 \end{remark}
   
 \subsection{The theorem of Boldrighini, Keane and Marchetti}
 \label{theorembkm}
 
 In this section we prove that for all points on a flat disk and for almost all directions, the billiard trajectory passing through this point having the same  direction contains a vertex in its closure. Note that by { \it almost all directions} we mean that a set of full measure with respect to Lebesque measure on the unit tangent space of a point in the flat disk.
 
 Let $D$ be a flat disk. Let $q\in D$ be a non-singular interior point. 
 
 \begin{align}
 \mathbb{U}_q=\{v \in \mathbb{T}_q: \lvert v \rvert= 1 \}\equiv
 \R/2\pi \Z,
 \end{align} 
 \noindent where $\mathbb{T}_q$ is the tangent space at $q$. We identify $\mathbb{U}_q$ with the unit circle on $\C$ and endow it with the Lebesque measure on $\R/2\pi \Z$. Let $\rho$ be the Lebesque measure on $\mathbb{U}_q$. 
 
 \begin{theorem}
 	Let $q \in D$ be arbitrary point. For almost all $v \in \mathbb{U}_q$, the billiard trajectory initiating at $q$ and having direction $v$ contains a singular vertex in its closure. 
 \end{theorem}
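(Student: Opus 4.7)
The plan is to adapt the Boldrighini--Keane--Marchetti strategy via the flat-disk unfolding of Section~2.2.

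First, I would reduce the statement to showing, for each fixed $\epsilon > 0$, that
\begin{align*}
A_\epsilon = \{v \in \mathbb{U}_q : \gamma_v \text{ stays at distance } \geq \epsilon \text{ from every singular point of } D\}
\end{align*}
has $\rho$-measure zero. Since the event that $\overline{\gamma_v}$ omits every singular point is precisely $\bigcup_{n\geq 1} A_{1/n}$ (there being only finitely many singular points), this reduction suffices.

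Next, for each $v \in A_\epsilon$, I would apply the unfolding of Section~2.2 to obtain the unbounded flat surface $Q_\infty^v$ with non-singular interior, the projection $\Pi_\infty^v$, and the straight half-line $\gamma_\infty^v$ with $\Pi_\infty^v(\gamma_\infty^v) = \gamma_v$. Because $\Pi_\infty^v$ is a local isometry and $\gamma_v$ stays $\epsilon$-far from singular points of $D$, the ray $\gamma_\infty^v$ stays $\epsilon$-far from every singular point of $Q_\infty^v$. Hence the open $\epsilon$-strip $S_v$ around $\gamma_\infty^v$ is non-singular and isometric to the Euclidean half-strip $[0,\infty) \times (-\epsilon,\epsilon)$. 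A comparison principle then follows: for any $v'$ with $|v-v'| < \epsilon/T$, the ray $\gamma_\infty^{v'}$ built from the same initial unfolding data lies inside $S_v$ for $t \in [0,T]$, so $v,v'$ share the same coding of edge-crossings up to time~$T$.

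The final step is a polar-coordinate Fubini / area-comparison argument. Parametrizing the exponential image at $q$ in $Q_\infty^v$ by $(v',t) \mapsto \gamma_\infty^{v'}(t)$ with Jacobian $t$ (since, in the flat unfolding, $\gamma_\infty^{v'}(t) = q + t v'$), the preimage $A_\epsilon \times [0,T]$ maps to a region of area $\tfrac{T^2}{2}\rho(A_\epsilon)$ inside the ball $B(q,T)$. Projecting this region down to $D$ via $\Pi_\infty^v$ and exploiting the $\epsilon$-separation together with the comparison principle to control the multiplicity of the covering, one obtains an inequality of the form $\tfrac{T^2}{2}\rho(A_\epsilon) \leq f(T)\cdot \mathrm{Area}(D)$ with $f(T) = o(T^2)$, which forces $\rho(A_\epsilon) = 0$ upon letting $T \to \infty$.

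The main obstacle I anticipate is making the multiplicity / covering estimate rigorous: one must show that the projection $\Pi_\infty^v: S_v \to D$ cannot self-overlap densely enough to accommodate the quadratic growth of the preimage area. This is the direct analogue of the key technical estimate in \cite{boldrighini}. The new feature in the flat-disk setting is the additional contribution of interior singular points of $D$, which appear as singular points of $Q_\infty^v$ at the endpoints of the lifted cut-segments and which $\gamma_\infty^v$ must also avoid; however, since the interior of $Q_\infty^v$ is non-singular and the strip $S_v$ is a genuine Euclidean half-strip, the geometric input for the multiplicity estimate carries over with only notational changes from the polygonal case.
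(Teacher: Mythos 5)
Your reduction to showing $\rho(A_\epsilon)=0$ and your use of the unfolding $Q_\infty$ with a non-singular $\epsilon$-strip around the lifted ray are sound and match the paper's setup, but the final step is a genuine gap, and I do not think it can be repaired in the form you propose. The area of the time-$T$ fan upstairs is indeed of order $T^2\rho(A_\epsilon)$, but the natural multiplicity of its projection to $D$ is \emph{also} of order $T^2$: a single trajectory re-enters any given region of $D$ roughly $T/\mathrm{diam}(D)$ times in time $T$, and your comparison principle only guarantees a common unfolding on direction-intervals of length about $\epsilon/T$, of which there are about $T\rho(A_\epsilon)/\epsilon$; the multiplicities coming from these distinct unfoldings simply add. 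So the best available inequality is $\tfrac{T^2}{2}\rho(A_\epsilon)\leq C\,T^2\cdot \mathrm{Area}(D)$, which yields no contradiction. Your claim that the needed bound $f(T)=o(T^2)$ ``carries over with only notational changes from the polygonal case'' has nothing to carry over: Boldrighini--Keane--Marchetti do not argue by area comparison.

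What the paper (following \cite{boldrighini}) actually does is a Lebesgue density-point argument, and the idea you are missing is the production of a singular point \emph{between} two unfolded rays whose codings diverge. Suppose $\rho(N_\delta)>0$, where $N_\delta$ is the set of directions whose trajectory stays $\delta$-far from the singular set, and let $\theta_0$ be a density point. For small $\epsilon$, compare the codings $\omega,\omega'$ of $\gamma(\theta_0)$ and $\gamma(\theta_0+\epsilon)$ with respect to the cut system $\frak{a}$. They cannot agree forever: otherwise both rays unfold to straight half-lines in a single $Q_\infty$ making angle $\epsilon$, and the V-shaped region between them has unbounded width while each copy of $D^*$ has bounded diameter. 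At the first index $k$ where the codings differ, the finite unfolding $Q_{k-1}$ contains a singular point $z$ in the region bounded by the two unfolded segments; the $\delta$-ball about $z$ subtends an angle at least $c\epsilon\delta$ at $q$, so a proportion at least $c\delta$ of the directions in $[\theta_0,\theta_0+\epsilon]$ hit that ball and hence lie outside $N_\delta$. This contradicts $\theta_0$ being a density point. It is this ``vertex caught between two diverging codings'' step, not an area count, that converts the unfolding into a measure estimate; I would rebuild your third paragraph around it.
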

 \begin{proof}
 	Let $\delta>0$ and $\theta \in \R/2\pi \Z$. Let $\gamma(\theta)$ bet the trajectory passing through $q$ and having the direction $\theta$. Let $V_0$ be the set of singular points of $D$. Let
 	\begin{align}
 	N_{\delta}=\{\theta \in \R/2\pi \Z: \text{dist}(\gamma(\theta), V_0) \geq \delta)\}
 	\end{align}
 	\noindent We will show that $\rho({N_{\delta}})=0$. It is clear that this is enough to prove the theorem. 
 	
 	Assume that there exists $\delta >0$ such that $\rho({N_{\delta}})>0$. Therefore there exists $\theta_0$ 
 	such that 
 	
 	\begin{align}
 	\lim_{\epsilon \to 0} \frac{\rho(N_{\delta}\cap[\theta_0,\theta_0+\epsilon])}
 	{\epsilon}=1.
 	\end{align}
 	\noindent In other words, $\theta_0$ is a density point of $N_{\delta}$. Let $y=(q,\theta_0)$ and $y'=(q,\theta_0+\epsilon)$. Let $\gamma$ and $\gamma'$ the trajectories having initial point $q$ and directions $\theta_0$ and $\theta_0+\epsilon$. 
 	
 	Assume that $D$ has $m$ singular interior points  $\{x_1,\dots,x_m\} \subset V_0$. Let $e_1,\dots,e_n$ be the edges of $D$. Let $a_1,\dots,a_m$ be broken not self-intersecting geodesics joining these singular points and boundary. They satisfy also the following conditions: 
 	 \begin{enumerate}
 		\item 
 		$a_i$ joins $x_i$ and a singular boundary point $y_{j(i)}$,
 		\item
 		$a_i\cap b(D)=\{y_{j(i)}\}$,
 		\item
 		$a_i\cap a_j \subset b(D)$.
 		
 	\end{enumerate}
 	
 	Let $\omega$ and $\omega'$ be the word sequences that we obtain from $\gamma$ and $\gamma'$ as in Section \ref{unfoldingflatdisks}. 
 	
 	\begin{align}
 	\omega=(\omega_1,\omega_2, \dots)\ \ \
 	 	\omega'=(\omega'_1,\omega'_2, \dots)
 	\end{align}
 	\noindent where $\omega_i,\ \omega'_i \in \{a_1^l,a_1^r,\dots,a_m^l,a_m^r,e_1,\dots,e_n\}$.  Let $D^*$ be the disk obtained from $D$ by cutting it through $a_1,\dots,a_m$. Assume that $\omega=\omega'$. Then there exists a flat surface $Q_{\infty}$ obtained from the copies of $D^*$ and two half-lines on it making an angle of $\epsilon$ with each other. Call these lines $\gamma_{\infty}$ and $\gamma'_{\infty}$. This implies that $Q_{\infty}$ contains a V-shaped region. See the Figure \ref{vshape}. But this is impossible since the diameter of $D^*$ is finite. Therefore $\omega\neq \omega'$.
 	
 	\begin{figure}
 		\hspace*{-10cm}                                                           
 		
 		\centering
 		\includegraphics[scale=0.6]{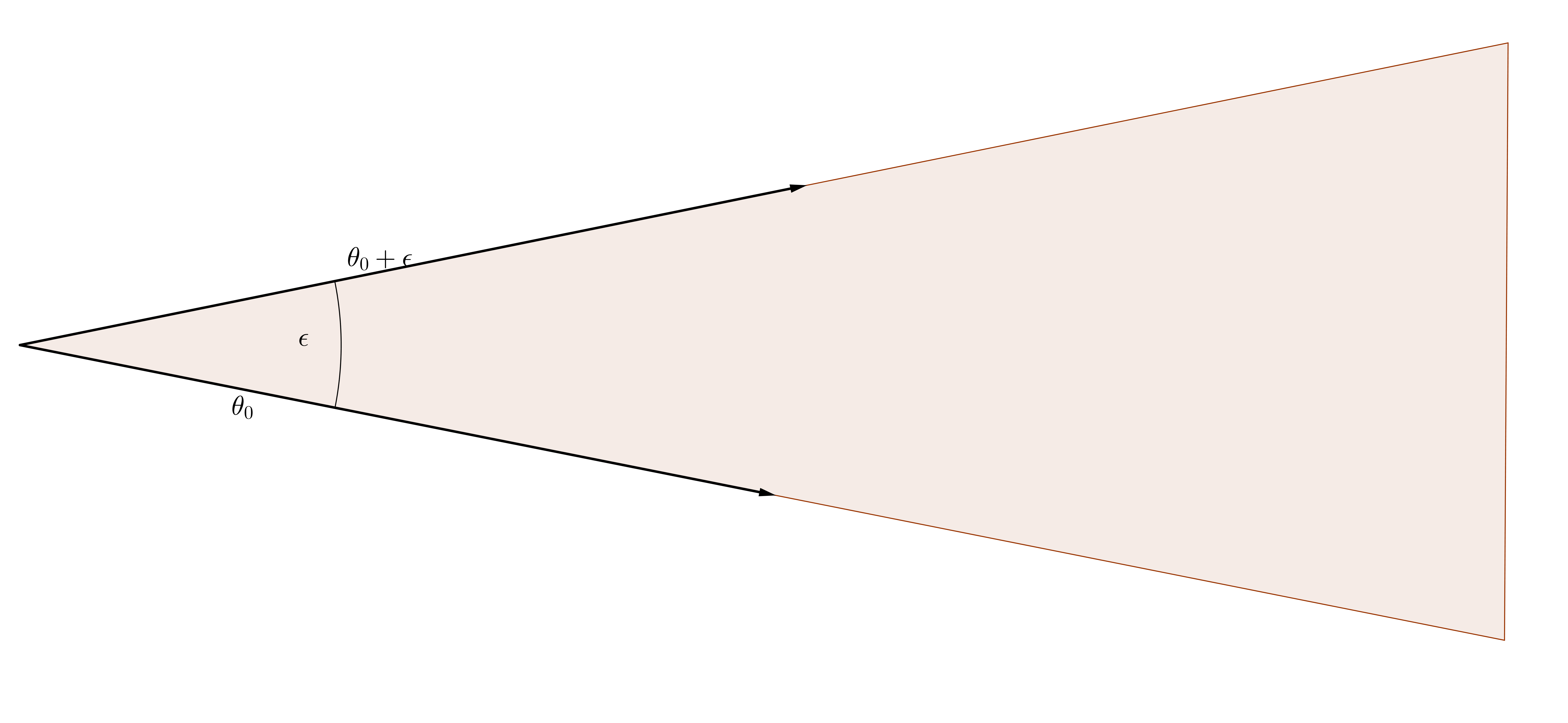}
 		\caption{a V-shaped region in $Q_{\infty}$}
 		\label{vshape}
 	\end{figure}
 	Let $k$ be the smallest integer such that $\omega_k\neq \omega'_k$. Consider the polygon $Q_{k-1}$. This polygon has at least one vertex $z \in V_0$ inside the region bounded by $\gamma_{k-1}$ and $\gamma'_{k-1}$. Let $S_{\delta}$ be a circle of radius $\delta$ with center $z$.
     
     From the Figure \ref{vshape2} it follows that each trajectory having direction $\theta \in [\theta_0+\theta',\theta_0+\theta'+\alpha]$ is not in $N_{\delta}$. But it is evident that there exists a constant $c >0$ such that
     
     \begin{align}
     \alpha\geq c \epsilon \delta
    \end{align}
    Therefore 
    
    \begin{align}
    \frac{\rho(N_{\delta}\cap[\theta_0,\theta_0+\epsilon])}{\epsilon} \leq \frac{\epsilon-c \epsilon \delta}{\epsilon}=1 - c \delta,
    \end{align}
    \noindent which contradicts with the assumption that $\theta_0$ is a point of density.
    
    If $q$ is a non-singular boundary point, then we can argue similarly. Of course, the distance between $V_0$ and  any trajectory originating from a singular point is zero.
     
\end{proof}

\begin{figure}
	\hspace*{-10cm}                                                           
	
	\centering
	\includegraphics[scale=0.6]{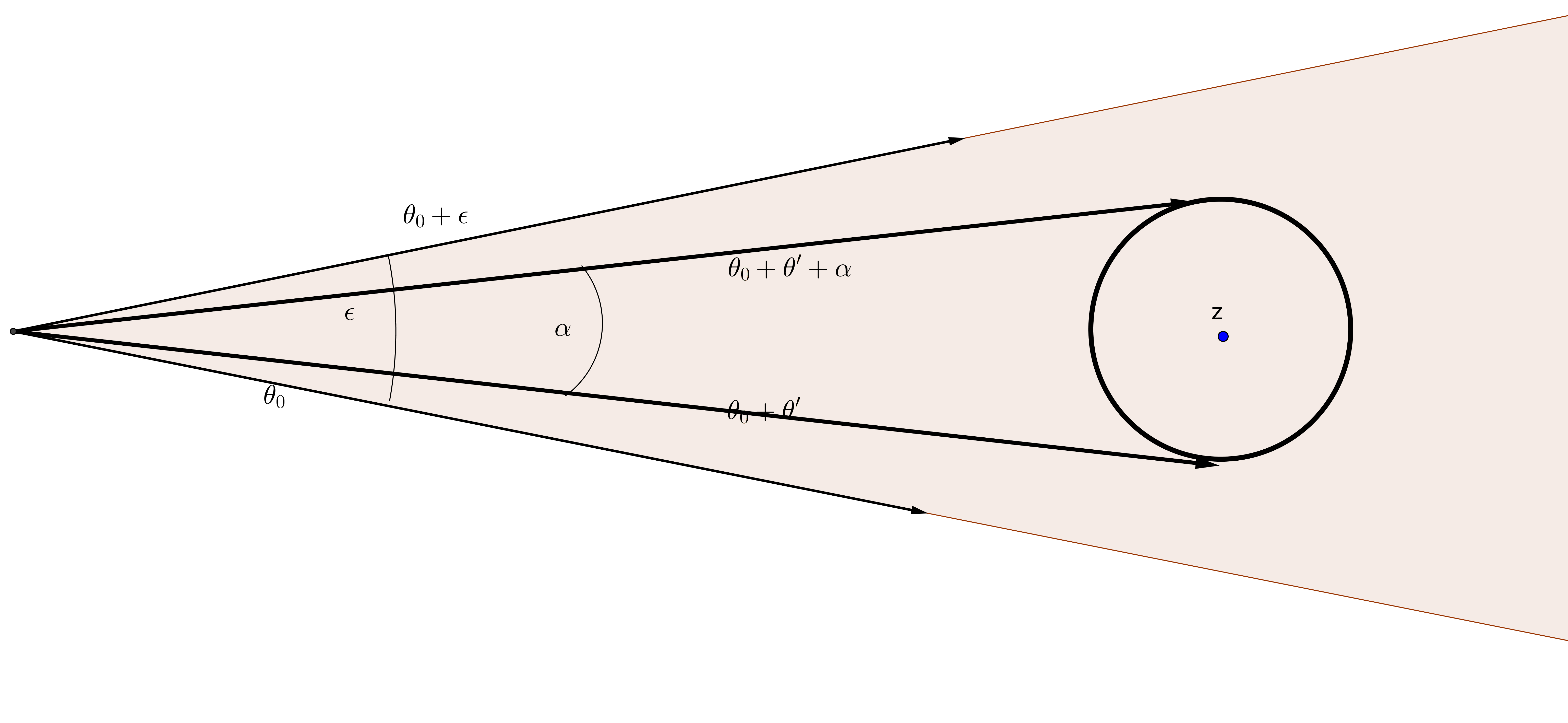}
	\caption{}
	\label{vshape2}
\end{figure}

 \section{Katok-Zemlyakov Construction for Flat Disks}
 \label{KZ}
 In this section we first recall Katok-Zemlyakov construction \cite{top-tran} for the polygons and then make a similar construction for the flat disks. We show that the translation surface  that we construct is unique; it does not depend on the way that we cut the flat disk. Also, we calculate the genus of this translation surface.
 
 \subsection{Polygons}
 \label{KZpolygons}
 Here we summarize how  we obtain a translation surface out of a rational polygon. Let $P \subset \R^2$ be a polygon. Let $y_1,\dots,y_n$ be the vertices and $e_1,\dots,e_n$ be the edges of $P$. Assume that the angle at $y_i$, $\theta_{y_i}$, is equal to $\pi\frac{k_i}{l_i}$, where $gcd(k_i,l_i)=1$. Let $l=lcm(l_1,\dots,l_n)$. Let $G$ denote the linear part of the group generated by reflections across the edges $e_1,\dots,e_n$. Then $G$ has $2l$ elements. Indeed, $G$ is isometric to the dihedral group of order $2l$. 
 
 For each $g\in G$, let $gP$ be a polygon such that
 
 \begin{enumerate}
 	\item 
 	$gP$ is obtained by applying a translation of $\R^2$ to $g.P$,
 	\item
 	$gP\cap g'P=\emptyset$ if $g\neq g'$. 
 \end{enumerate}
 
 \noindent Label the edges of $gP$ by  the symbols $ge_1, \dots,ge_n$. The reflection across $ge_i$ gives an element of $G$. Let $\tau_{ge_i}$ denote this element. Consider the polygon 
 $$(\tau_{ge_i}g)P.$$
\noindent Glue the edge $(\tau_{ge_i}g)e_i$ of  $(\tau_{ge_i}g)P$ with the edge $ge_i$ of $gP$ for each $g \in G \ \text{and}\ 1\leq i \leq n$. By that way we get a flat surface without boundary. $(\tau_{ge_i}g)e_i$ and $ge_i$ are glued by translations, therefore the surface that we get is a translation surface. 

\paragraph{Example:} We can obtain a flat torus through from a square in $\R^2$. We can glue four copies of the square to get the torus. See Figure \ref{torus}. 

Call the surface that we obtain by the above construction $S(P)$. Let $d(P)$ be the doubling of $P$: the surface that we obtain by gluing two copies of $P$ along their boundaries. Then $S(P)$  naturally gives a branched covering of $d(D)$ which is local isometry outside of the ramification points.

\begin{figure}
	\hspace*{-10cm}                                                           
	
	\centering
	\includegraphics[scale=0.5]{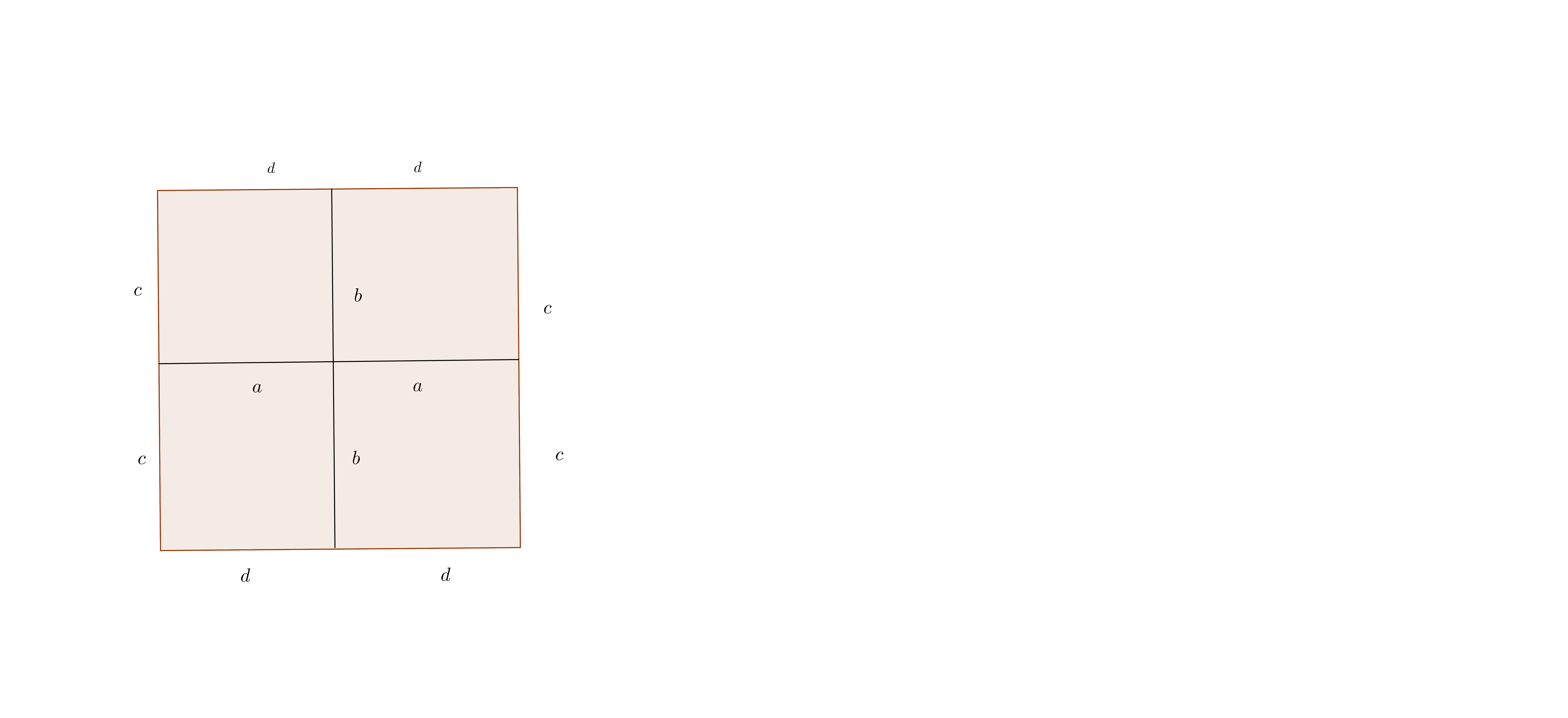}
	\caption{Glue the opposite edges of the big square to get a flat torus out of 4 copies of a square. }
	\label{torus}
\end{figure}

\subsection{Flat Disks} 
\label{KZflatdisks}
Now assume that we have a flat disk $D$. Let $x_1,\dots,x_m$ be the singular interior points of $D$. Assume that $\theta_{x_i}=2\pi \frac{k_i'}{l_i'}$ for each $i$, where $gcd(k_i',l_i')=1$. Let $e_1, \dots, e_n$ be the boundary components of $D$ and $y_1,\dots,y_n$ be the singular boundary points. Assume that $\theta_{y_j}=\pi \frac{k_j}{l_j}$ for each $j$, where $gcd(k_j,l_j)=1$ Let $a_1,\dots,a_m$ be the broken not self-intersecting geodesics such that

 \begin{enumerate}
	\item 
	$a_i$ joins $x_i$ and a singular boundary point $y_{j(i)}$,
	\item
	$a_i\cap b(D)=\{y_{j(i)}\}$,
	\item
	$a_i\cap a_j \subset b(D)$.
	
\end{enumerate}

Cut $D$ through $a_i$'s to get a flat disk $D^*$ which has $2m+n$ edges and a non-singular interior. For each $1\leq i \leq m$, there exist two boundary components of $D^*$ which are labeled by $a_i$. Since $D^*$ is oriented we can label one of them by $a_i^l$ and the other by $a_i^r$, where $l$ and $r$ refer to left and right, respectively. 

Since the interior of $D^*$ is non-singular there is a developing map

$$D^* \rightarrow \R^2$$

\noindent which is a local isometry outside of the singular points of $D^*$. Without loss of generality, 
we assume that this map is injective. That is, it is an isometry between $D^*$ and its image. Thus we assume that $D^*$ is a polygon in the plane.

Let $G$ be the linear part of the group of isometries of $\R^2$ 
 generated by the rotations of an angle $2\pi\frac{k_i'}{l_i'}$
about $x_i$, $1\leq i \leq m$, and the reflections across the edges $e_1,\dots,e_n$ of $D^*$. Let $l=lcm(l_1,\dots,l_n,l_1',\dots,l_m')$. Then $G$ has order $2l$. Indeed, $G$ is isometric to the dihedral group of order $2l$. 

For each $g \in G$ let $gD^*$ be a polygon obtained by a translation of  $g.D^*$ such that $gD^*\cap g'D^*=\emptyset$ whenever $g\neq g'$. Label the edges of $gD^*$ by the symbols 

$$ge_1,\dots,ge_n,ga_1^l,ga_1^r,\dots, ga_m^l,ga_m^r.$$

\noindent For each $ge_i$, let $\tau_{ge_i}$ be the linear part of the reflection across to $ge_i$. Glue $gD^*$ and $(\tau_{ge_i}g)D^*$ along the edges $ge_i$ and  $(\tau_{ge_i}g)e_i$. It is clear that we can do this by a translation. 

Let $r_{\theta}$ be  the counterclockwise rotation of angle $\theta$ about the origin. Glue the polygons $gD^*$ and $(r_{-\theta_{x_i}}g)D^*$ through the edges  $ga_i^r$ and $r_{-\theta_{x_i}}ga_i^l$. Note that this gluing can be given by a translation of $\R^2$. By that way we obtain a translation surface $S(D)$. Clearly, there is a branched covering $\Pi:S(D)\rightarrow d(D)$ which is a local isometry except at the ramification points. We call $S(D)$ invariant surface of $D$.

\begin{figure}
	\hspace*{-10cm}                                                           
	
	\centering
	\includegraphics[scale=0.5]{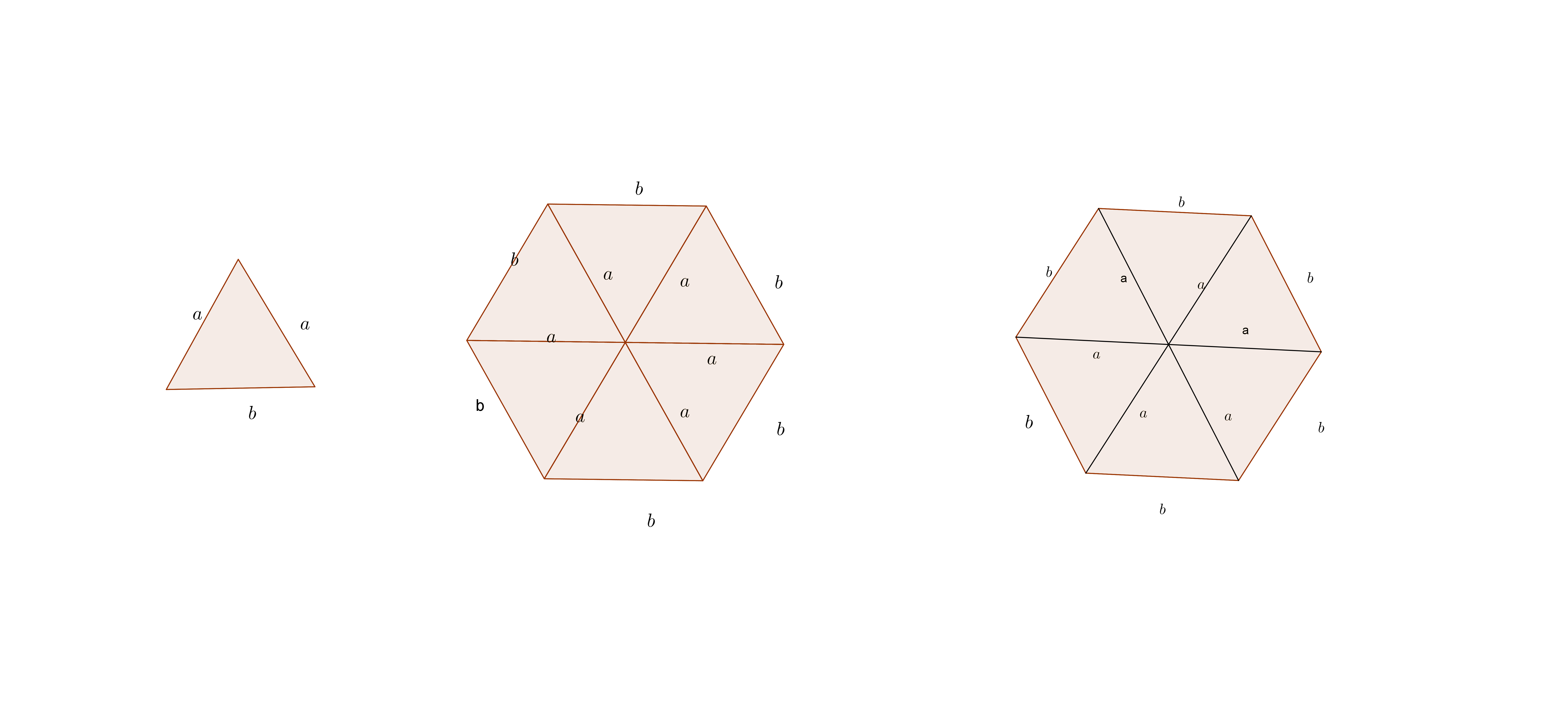}
	\caption{The translation surface of genus two from the flat disk obtained from an equilateral triangle }
	\label{hexagon}
\end{figure}

\paragraph{Example:}
Assume that $D$ is the flat disk obtained by gluing two edges of an equilateral triangle. Thus $D$ is a flat disk with one singular interior point and one singular boundary point with angles $\frac{\pi}{3}$ and $\frac{2\pi}{3}$. See the Figure \ref{hexagon}. Then $S(D)$ is obtained by gluing each edge of the hexagon on the middle by the opposite edge of the hexagon on the right. $S(D)$ has genus two.

\begin{figure}
	\hspace*{-10cm}                                                           
	
	\centering
	\includegraphics[scale=0.5]{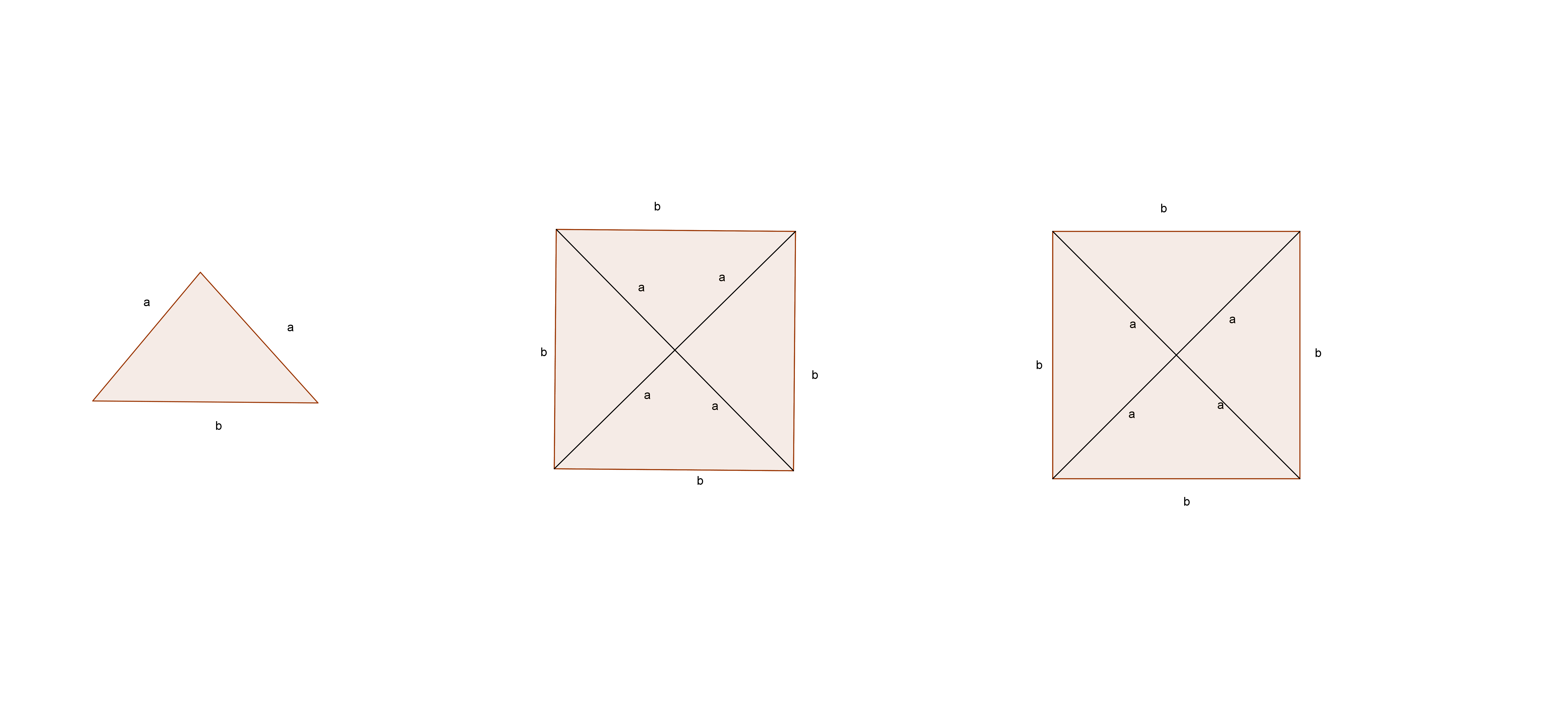}
	\caption{The flat torus which is the translation surface of  the flat disk obtained from a right isosceles triangle.  }
	\label{torus2}
\end{figure}
\paragraph{Example:}
Assume that $D$ is the flat disk obtained by gluing two edges of an isosceles right triangle. Thus $D$ is a flat disk with one singular interior point and one singular boundary point with angles $\frac{\pi}{4}$ and $\frac{\pi}{2}$. See the Figure \ref{torus2}. Then $S(D)$ is obtained by gluing each edge of the square on the middle by the opposite edge of the square on the right. $S(D)$ has genus one.

\begin{remark}
	If the boundary of $D$ is non-singular we can take any point $y$ in $b(D)$ and some broken geodesics $a_1\dots,a_m$ joining the singular points and $y$. Repeating the construction above, we get a translation surface $S(D)$ and a cover $\Pi:S(D)\rightarrow d(D)$. 
\end{remark}
\subsection{Uniqueness of Invariant Surface}
 
\label{uniqueness}

In this section we prove that the surface $S(D)$ which was found in the previous section does not depend on the chosen paths $a_1,\dots,a_m$. 
First we introduce a notion of covering for the flat surfaces. Then we state some results found in \cite{ISH}. From a topological point of view such a covering is nothing else than a branched covering. But, we also require that it respects the flat metrics. 

Given a branched covering $\psi : S^* \rightarrow S$ between two surfaces, we denote the set of the branched points by $\frak{b}$ and the set of the ramification points by $\frak{r}$. 

Let $S^*$ and $S$ be two flat surfaces.
\begin{udefinition}
	A map $\psi:\ S^* \rightarrow S $ is called a flat (covering) map if it is a
	branched covering and
	$$\psi\restriction_{S^*-\psi^{-1}(\frak{b})}: \  S^*-\psi^{-1}(\frak{b}) \rightarrow S-\frak{b}$$
	is a local isometry.
\end{udefinition}

\paragraph{Examples:}
\begin{enumerate}
	\item
	If $G$ is a finite group of isometries of $S^*$ then $S^*\rightarrow S^*/G$ is a flat covering.
	
	\item
	
	The map $\Pi: S(D)\rightarrow d(D)$ obtained in Section \ref{KZflatdisks} is flat.
	
	\item If $\phi: S'\rightarrow S$ is a branched covering then the induced map on $S'$ is flat, and this makes $\phi$ a flat covering.
	\end{enumerate}

\begin{udefinition}
	\begin{enumerate}
		\item 
		A closed, orientable flat surface is called really flat if it has finite holonomy group.
		
		\item
		
		An orientable compact flat surface with boundary is called really flat if its doubling is really flat.
		
		\item
		A non- orientable compact surface is really flat if its orientable double cover is really flat.
	\end{enumerate}
\end{udefinition}

Thus the really flat surfaces are somewhere in between the translation surfaces and arbitrary flat surfaces. Note that a flat disk is really flat if and only if the angles at singular points are rational multiples of $\pi$. 

\begin{theorem}
	Let $S$ be a compact, oriented, closed really flat surface.
	\begin{enumerate}
		\item 
		There exists a translation surface $S^*$ and a flat covering $\psi: S^*\rightarrow S$. Also, this map corresponds to the kernel of the holonomy represntation of $S$. 
		
		\item
		$\psi$ is a cyclic Galois covering.
		
		\item
		If $x \in S$ has angle $2\pi\frac{k}{l}$, $gcd(k,l)=1$, then the ramification index at each point in the fiber of $x$ is $l$.
		\item
		The degree of $\psi$ is the order of the holonomy group.
	\end{enumerate} 
\end{theorem}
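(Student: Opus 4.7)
The plan is to build $\psi$ as the covering of $S$ corresponding to the kernel of the holonomy representation of the flat structure, and then to show that all four claims reduce to standard covering-space considerations, with the only real work being the verification that the metric completion of this cover is actually a translation surface.

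First I would let $\Sigma\subset S$ denote the (finite) set of singular points and consider the flat Riemannian manifold $S\setminus \Sigma$. Parallel transport along loops based at some $x_0\in S\setminus\Sigma$ gives a holonomy homomorphism $\rho:\pi_1(S\setminus\Sigma,x_0)\to H\subset SO(2)$, the target lying in $SO(2)$ because $S$ is oriented. By the really-flat hypothesis, $H$ is finite, and every finite subgroup of $SO(2)$ is cyclic; so $H$ is cyclic of some order $l$. Let $K=\ker\rho$, a normal subgroup of index $l$, and let $\tilde S_0\to S\setminus\Sigma$ be the associated covering. Pulling back the flat metric makes $\tilde S_0$ a (non-compact) flat manifold, and by construction every loop in $\tilde S_0$ has trivial holonomy, so $\tilde S_0$ is an (incomplete) translation surface. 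The covering is Galois with deck group $\pi_1(S\setminus\Sigma)/K\cong H$, hence cyclic.

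Now I would compactify $\tilde S_0$ by filling in points over each $x\in\Sigma$. Pick $x\in\Sigma$ with cone angle $2\pi k_x/l_x$, $\gcd(k_x,l_x)=1$. A small loop $\gamma_x$ around $x$ has holonomy equal to rotation by $2\pi k_x/l_x$, an element of order exactly $l_x$ in $SO(2)$; hence $\rho([\gamma_x])$ has order $l_x$ in $H$, and in particular $l_x\mid l$. Standard covering theory then shows that the preimage under $\tilde S_0\to S\setminus\Sigma$ of a small punctured disk around $x$ consists of $l/l_x$ components, each a cyclic degree-$l_x$ cover of the punctured disk. Adding the missing center to each of these components fills in a cone point of total angle $l_x\cdot (2\pi k_x/l_x)=2\pi k_x$; since $2\pi k_x$ is an integer multiple of $2\pi$, the completed flat structure still has trivial holonomy around the added point. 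Doing this at every $x\in\Sigma$ produces a compact translation surface $S^*$ together with a flat map $\psi:S^*\to S$ whose restriction to the smooth locus is the covering associated to $\ker\rho$. This proves (1).

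Statements (2)--(4) then drop out: (2) because $K$ is the kernel of a homomorphism to a cyclic group, so the deck group is cyclic; (3) because the local monodromy at $x$ has order $l_x$, so the preimage of a punctured disk around $x$ splits into components each of which is a degree-$l_x$ cyclic cover, giving ramification index $l_x$ at every point of $\psi^{-1}(x)$; (4) because $\deg\psi=[\pi_1(S\setminus\Sigma):K]=|H|$. The main obstacle is the compactification step: one must check carefully that inserting the missing points over $\Sigma$ does not introduce any non-trivial holonomy and that the resulting metric is genuinely flat with cone singularities, i.e.\ each inserted cone angle is a positive integer multiple of $2\pi$ (as computed above). Once this is done, the non-orientable and bordered cases promised by the surrounding theory reduce to the present oriented closed case by passing to the orientable double cover or to the metric double.
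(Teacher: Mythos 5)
Your construction is correct and is the standard one: pass to the cover of $S\setminus\Sigma$ corresponding to the kernel of the $SO(2)$-valued holonomy representation, note the image is finite cyclic, and fill in the punctures after checking each resulting cone angle is the integer multiple $2\pi k_x$ of $2\pi$ with trivial local holonomy, which yields all four claims at once. The paper itself gives no proof of this theorem --- it is quoted from the author's reference [ISH] ("Flat Surfaces with Finite Holonomy Group") --- so there is nothing internal to compare against, but your argument is exactly the one that proof is expected to contain and I see no gap in it.
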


We call $\psi: S^* \rightarrow S$ canonical cover. Now let $D$ be a rational flat sphere. As before, let $x_1\dots x_m$ be the singular interior points and $y_1,\dots,y_n$ be the singular boundary points. Assume that the angle at $x_i$ is $2\pi \frac{k_i'}{l_i'}$ and the angle at $y_j$ is $\pi \frac{k_j}{l_j}$, where $k_i'$ and $l_i'$ are coprime integers, and so are $k_j$ and $l_j$. Let $\Pi^*:d(D)^*\rightarrow d(D)$ be the canonical covering of $d(D)$. 

\begin{theorem}
	
Let $l$ be the leaast common multiple of $l_1,\dots,l_n,l_1',\dots,l_m'$. Then

\begin{enumerate}
	\item 
	the degree of $\Pi^*$ is $l$,
	\item
	the ramification index at each point in the fiber of $x_i$ or its mirror image is $l'_i$,
	\item
	the ramification index of each point in the fiber of $y_j$ is $l_j$,

\end{enumerate}
\end{theorem}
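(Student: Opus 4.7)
The plan is to deduce all three claims from the previously stated theorem on canonical covers applied to the doubled surface $d(D)$, combined with an explicit computation of the holonomy group of $d(D)$. First I would analyze the singular structure of $d(D)$: gluing two copies of $D$ along their common boundary turns each interior singular point $x_i$ of $D$ into two interior singular points of $d(D)$ (one in each sheet, which are mirror images of each other), each retaining its cone angle $2\pi k_i'/l_i'$; each singular boundary point $y_j$ of $D$, which has angle $\pi k_j/l_j$, is identified with its partner in the other sheet and becomes a single interior singular point of $d(D)$ with cone angle $2\pi k_j/l_j$; non-singular boundary points of $D$ become non-singular interior points of $d(D)$, so the list above exhausts the singular points.

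Next I would compute the holonomy group $H$ of $d(D)$. Since $d(D)$ is an orientable closed flat surface (topologically a sphere), $H$ is a finite subgroup of $SO(2)$ and therefore cyclic. The fundamental group of the complement of the singular locus is generated by small loops around the singular points, so $H$ is generated by the local monodromies around those points. Around a cone point of angle $2\pi k/l$ with $\gcd(k,l)=1$, the local holonomy is the rotation by $2\pi k/l$, which generates the cyclic subgroup of $SO(2)$ of order exactly $l$ (this is precisely where the coprimality assumptions enter). Consequently $H$ contains the rotation of order $l_i'$ for every $i$ and the rotation of order $l_j$ for every $j$, hence contains the rotation of order $l = \mathrm{lcm}(l_1,\dots,l_n,l_1',\dots,l_m')$; conversely $H$ is generated by elements whose orders divide $l$, so $|H| = l$.

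Finally I would invoke the earlier theorem directly. Its part (4) gives $\deg(\Pi^*) = |H| = l$, which is claim (1). Its part (3) asserts that the ramification index over a cone point of angle $2\pi k/l$ (with $\gcd(k,l)=1$) is $l$; applied to the cone points of $d(D)$ identified in the first step, this yields ramification index $l_i'$ over $x_i$ and over its mirror image (cone angle $2\pi k_i'/l_i'$), and ramification index $l_j$ over $y_j$ (cone angle $2\pi k_j/l_j$ in $d(D)$), giving (2) and (3).

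The main obstacle is essentially bookkeeping: correctly reading off the cone angles in $d(D)$ after doubling (the factor of two for boundary singularities is easy to miscount) and verifying that the holonomy group achieves the full order $l$ rather than a proper divisor. The second point would fail without the coprimality hypotheses $\gcd(k_i',l_i') = \gcd(k_j,l_j) = 1$, which guarantee that each local monodromy generates the cyclic group of its full order; this is the one subtle input, and everything else follows by directly quoting the canonical-cover theorem.
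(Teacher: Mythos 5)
Your proposal is correct and follows exactly the route the paper intends: the paper states this theorem without a written proof, deriving it implicitly from the preceding canonical-cover theorem (quoted from the flat-surfaces-with-finite-holonomy reference) applied to $d(D)$. Your computation of the cone angles after doubling ($2\pi k_i'/l_i'$ twice for each $x_i$, $2\pi k_j/l_j$ once for each $y_j$) and of the holonomy group's order as $l$ correctly supplies the details the paper omits.
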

\noindent Now we prove that the surfaces $S(D)$ and $d(D)^*$ are same.

\begin{theorem}
	
	There is an isometry $\phi: S(D) \rightarrow d(D)^*$ making the following diagram commutative:
	
	\[
	\begin{tikzcd}
	S(D)\arrow{r}{\phi} \arrow{rrd}{\Pi}
	&	d(D)^*\arrow{rd}{\Pi^*}
	\\
	&&
d(D)
	\end{tikzcd}
	\]
	\begin{proof}
		$S(D)$ has trivial holonomy group, so it corresponds to a subgroup of the kernel of the holonomy representation of $d(D)^*$. But, the degree of $\Pi: S(D) \rightarrow d(D)$ is $l$, which is the degree of the holomomy group. Thus $S(D)$ corresponds to the kernel of the holonomy group. But, $d(D)^*$ also corresponds to the kernel of the holonomy group. Therefore $\Pi: S(D)\rightarrow d(D)$ and $\Pi^*: d(D)^*\rightarrow d(D)$ are same as flat coverings.
	\end{proof}
\end{theorem}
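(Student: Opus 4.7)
The plan is to exploit two facts: the construction makes $S(D)$ into a translation surface, and the canonical cover $\Pi^*: d(D)^* \to d(D)$ is uniquely characterized among flat covers of $d(D)$ by corresponding to the kernel of the holonomy representation of $d(D)$. The whole argument is then a degree comparison together with the universal property of $\Pi^*$.

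First I would verify that $\Pi: S(D) \to d(D)$ is itself a flat covering in the sense of the previous subsection. By the construction in Section \ref{KZflatdisks}, the gluings between the polygons $gD^*$ are all translations of $\R^2$, so $S(D)$ carries a translation structure and $\Pi$ is a local isometry away from the preimages of the singular points $x_i$ and $y_j$ of $d(D)$. The branching occurs exactly over the singular set of $d(D)$, with ramification indices that can be read off from the stabilizers in $G$ of the rotation centers. This identifies $\Pi$ as a flat branched covering to which the canonical-cover theorem applies.

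Next, since $S(D)$ is a translation surface, its holonomy group is trivial. By part (1) of the canonical-cover theorem, the cover $\Pi^*$ corresponds precisely to the kernel of the holonomy representation of $d(D)$; the universal property implicit in that statement says that any flat cover of $d(D)$ whose holonomy is trivial factors through $\Pi^*$. Applying this to $\Pi$ produces a flat map $\phi: S(D) \to d(D)^*$ with $\Pi^* \circ \phi = \Pi$, which is the commutativity required by the diagram.

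Finally I would compare degrees in order to promote $\phi$ to an isometry. The surface $S(D)$ is built from $|G| = 2l$ copies of $D^*$, whereas $d(D)$ is built from two copies of $D^*$ glued along their boundaries, so $\deg \Pi = l$; the theorem quoted just above gives $\deg \Pi^* = l$; hence $\deg \phi = 1$. A degree-one flat covering between compact flat surfaces is a bijective local isometry away from a finite set and therefore an isometry on the nose, which closes the triangle. The main obstacle I expect is the preliminary step of verifying that $\Pi$ is a genuine flat covering with the correct ramification index at each singular point of $d(D)$; once this has been checked, the Galois-theoretic comparison is essentially forced by the universal property of $\Pi^*$.
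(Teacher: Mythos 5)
Your proposal is correct and follows essentially the same route as the paper: both arguments invoke the universal property of the canonical cover (a flat cover with trivial holonomy corresponds to a subgroup of the kernel of the holonomy representation) and then compare degrees, using $\deg\Pi = l = \deg\Pi^*$ to conclude the two covers coincide. The only difference is cosmetic: you phrase the conclusion as ``$\phi$ has degree one, hence is an isometry,'' while the paper phrases it as ``both covers correspond to the kernel, hence are the same flat covering,'' and you make explicit the preliminary check that $\Pi$ is a flat covering, which the paper records separately as an example in Section~\ref{uniqueness}.
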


 \subsection{Calculation of Euler Characteristics}
 \label{eulerchar}
 We first give Riemann-Hurwitz formula. Let $S''$ and $S'$ be two compact orientable surfaces and $\psi: S'' \rightarrow S'$ be a branched covering. The following formula is called Riemann-Hurwitz formula.
 
 \begin{align}
 \chi(S'')=N\chi(S')-deg(R),
 \end{align}
 
 \noindent 
 where $N$ is the degree of $\psi$ and $R$ is the ramification divisor:
 $$R=\sum_{p\in S''}(e_p-1)p.$$
 Here $e_p$ is the ramification index of $\psi$ at $p$.

 As before, let $D$ be a flat disk with $m$ singular interior points $x_1,\dots,x_m$ and $n$ singular boundary points $y_1,\dots,y_n$. Let $\theta_{x_i}=2\pi\frac{k_i'}{l_i'}$ and $\theta_{y_j}=\pi \frac{k_j}{l_j}$. Consider the cover $\Pi^*:d(D)^*\rightarrow d(D)$.
 
 \begin{enumerate}
 	\item 
 	It has degree $l$,  where $l$ is the least common multiple of $l_1',\dots l_m',l_1,\dots,l_n$.
 	\item
 	The ramification index at a point $x_i$ or its mirror image is $l_i'$.
 	\item
 	The ramification index at a point over $y_j$ is $l_j$.
 	\item
 	The set ${\Pi^*}^{-1}(x_i)$ consists of $\frac{l}{l_i'}$ points. 
 	\item
 	If $x'_i$ is the mirror image of $x_i$, then the set ${\Pi^*}^{-1}(x_i')$ consists of $\frac{l}{l_i'}$ points. 
 	\item
 	The set ${\Pi^*}^{-1}(y_j)$ consists of $\frac{l}{l_j}$ points. 
  \end{enumerate} 
  
  It follows that \begin{align}
  deg (R)= 2\sum_{i=1}^m \frac{l}{l_i'}(l_i'-1)+\sum_{j=1}^{n}\frac{l}{l_j}(l_j-1).
  \end{align}
 
 Therefore 
 
 \begin{align}
 \chi(S(D))= 2l - 2\sum_{i=1}^m \frac{l}{l_i'}(l_i'-1)-\sum_{i=1}^{n}\frac{l}{l_j}(l_j-1).
 \end{align}

\end{document}